\newtheorem{theorem}{Theorem}
\newtheorem{lemma}[theorem]{Lemma}
\newtheorem{corollary}[theorem]{Corollary}
\newtheorem{Conditions}{Conditions} 
\newtheorem*{thm}{Theorem}
\newcommand{\R}{\mathbf{R}}
\renewcommand{\S}{\mathbf{S}}
\begin{document}
 
\title{Width and Flow of Hypersurfaces by Curvature Functions}

\author{Maria Calle}
\address{Max Planck Institute for Gravitational Physics (Albert Einstein Institute)\\
Am M\"{u}hlenberg 1\\
14476 Golm\\
Germany} 

\author{Stephen J. Kleene}
\address{Department of Mathematics\\
Johns Hopkins University\\
3400 N. Charles St.\\
Baltimore, MD 21218}

\author{Joel Kramer}
\address{Department of Mathematics\\
Johns Hopkins University\\
3400 N. Charles St.\\
Baltimore, MD 21218}

\email{maria.calle@aei.mpg.de, skleene@math.jhu.edu and jkramer@math.jhu.edu}

\maketitle

It is well known that a convex closed hypersurface in $\R^{n+1}$ evolving by its mean curvature remains convex and vanishes in finite time.  In \cite{andrews1}, Ben Andrews showed that the same holds for much broader class of evolutions. T. H. Colding and W. P. Minicozzi in \cite{colding-minicozzi}
give a bound on extinction time for mean curvature flow in terms of an invariant of the initial hypersurface that they call the width.  In this paper, we generalize this estimate to the class evolutions considered by Andrews:

\begin{thm} \label{mainresult}
Let $\{M_t\}_{t\ge 0}$ be a one-parameter family of smooth compact and strictly convex hypersurfaces in $\R^{n+1}$ satisfying $\frac{d}{dt}M_t = F\nu_t$, where $\nu_t$ is the unit normal of $M_t$ and where $F$ satisfies conditions 3.1 in \cite{andrews1}. Let $W(t)$ denote the width of the hypersurface $M_t$. Then in the sense of limsup of forward difference quotients it holds:
\begin{equation} \label{first}
\frac{d}{dt}W\le-\frac{4\pi}{C_0n},
\end{equation}
and
\begin{equation} 
W(t)\le W(0)-\frac{ 4\pi}{C_0n}t. 
\end{equation}
\end{thm}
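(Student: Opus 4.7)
I would follow the Colding--Minicozzi strategy, adapted to the curvature function $F$. Set up the width as an inf--sup $W(M) = \inf_{\hat\sigma}\max_s |\hat\sigma(s)|$, where $\hat\sigma$ ranges over sweepouts of $M$ by $2$-spheres in a nontrivial relative homotopy class. Fix a time $t_0$ and, given $\varepsilon>0$, choose a sweepout $\hat\sigma_0$ of $M_{t_0}$ with $\max_s|\hat\sigma_0(s)|<W(t_0)+\varepsilon$. Push $\hat\sigma_0$ forward along the ambient flow map $\Phi_{t_0,t}:M_{t_0}\to M_t$ to obtain sweepouts $\hat\sigma_h$ of $M_{t_0+h}$ for small $h>0$, giving $W(t_0+h)\le\max_s|\hat\sigma_h(s)|$. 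This reduces the theorem to a uniform upper bound on the $t$-derivative of slice area at the maximal slice.

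For a $2$-dimensional slice $\Sigma\subset M_t$ transported by the ambient velocity $F\nu_t$, the first variation of area yields
\[
\frac{d}{dt}|\Sigma_t| \;=\; -\int_\Sigma F\,\mathrm{tr}_{T\Sigma}(A_{M_t})\,dA,
\]
where the integrand is positive because $F>0$ on the positive cone and $A_{M_t}$ is positive definite by strict convexity. At the slice realizing the max, the min--max machinery (again as in Colding--Minicozzi) allows one to replace $\Sigma$, with arbitrarily small area error, by a minimal $2$-sphere in $M_t$. Applying the Gauss equation for $\Sigma\subset M_t\subset\R^{n+1}$, using that $\Sigma$ is minimal in $M_t$ to conclude each normal component of the second fundamental form of $\Sigma$ in $M_t$ is trace-free, and invoking Gauss--Bonnet on $\Sigma\cong S^2$, one obtains
\[
\int_\Sigma \det\!\bigl(A_{M_t}|_{T\Sigma}\bigr)\,dA \;\ge\; \int_\Sigma K_\Sigma\,dA \;=\; 4\pi.
\]

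To convert this into a lower bound on $\int_\Sigma F\,\mathrm{tr}_{T\Sigma}(A_{M_t})\,dA$, I would combine two pointwise comparisons: the AM--GM inequality $\mathrm{tr}(A_{M_t}|_{T\Sigma})\ge 2\sqrt{\det(A_{M_t}|_{T\Sigma})}$ for a $2\times 2$ positive matrix, together with the bound $F\ge H/(C_0 n)$ extracted from Andrews' conditions 3.1 (monotonicity and homogeneity of $F$ on the positive cone, together with the pinching of principal curvatures preserved by the flow). Because $H\ge \mathrm{tr}_{T\Sigma}(A_{M_t})$ under strict convexity, these combine to give the uniform estimate
\[
\int_\Sigma F\,\mathrm{tr}_{T\Sigma}(A_{M_t})\,dA \;\ge\; \frac{4\pi}{C_0 n},
\]
hence the forward-difference inequality $\frac{d}{dt}W \le -\frac{4\pi}{C_0 n}$, and integration in $t$ yields $W(t)\le W(0)-\frac{4\pi}{C_0 n}t$.

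The main obstacle is justifying the min--max replacement at the maximal slice under the push-forward: optimal sweepouts need not be attained, so one must argue through $\varepsilon$-almost optimal sweepouts and a limiting procedure before one can legitimately invoke the pointwise Gauss equation/Gauss--Bonnet computation. A secondary but still delicate point is isolating from Andrews' conditions 3.1 precisely the inequality $F\ge H/(C_0 n)$ with the intended constant $C_0$; several natural comparisons between $F$ and the principal curvatures are sharp only under additional pinching, so one must track which part of the hypothesis delivers the claimed coefficient.
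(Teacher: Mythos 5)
Your proposal deviates from the paper in a way that actually changes which theorem you would prove. The width $W(t)$ in the statement (and in \cite{colding-minicozzi}, which the theorem explicitly builds on) is defined via sweepouts by \emph{closed curves}: maps $\sigma:\S^1\times B^{n-1}\to M_t$ inducing a nontrivial class in $\pi_n$. You instead set up the min--max over sweepouts by $2$-spheres, which is the $2$-width $W_2$ treated in the final section of the paper; that is a genuinely different quantity, defined under weaker ($2$-convexity) hypotheses and requiring an auxiliary nontrivial class in $\pi_2(M)$. Your first-variation identity, Gauss equation/Gauss--Bonnet argument, and the AM--GM inequality $\mathrm{tr}(A|_{T\Sigma})\ge 2\sqrt{\det(A|_{T\Sigma})}$ are exactly what the paper uses for $W_2$ (Lemma on minimal surfaces and the last theorem of the paper), not for $W$.

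This mismatch also surfaces in the constant. Carrying your computation through carefully gives
\begin{equation}
\frac{d}{dt}\bigl|\Sigma_t\bigr|\Big|_{t=0}
= -\int_\Sigma F\,\mathrm{tr}_{T\Sigma}(A)\,dA
\le -\frac{1}{C_0 n}\int_\Sigma\bigl(\mathrm{tr}_{T\Sigma}(A)\bigr)^2\,dA
\le -\frac{4}{C_0 n}\int_\Sigma\det(A|_{T\Sigma})\,dA
\le -\frac{16\pi}{C_0 n},
\end{equation}
since the Gauss equation and minimality give $\det(A|_{T\Sigma})\ge K_\Sigma$ and Gauss--Bonnet gives $\int_\Sigma K_\Sigma = 4\pi$. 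That is the $-16\pi/(C_0 n)$ rate of the paper's $2$-width theorem, not the claimed $-4\pi/(C_0 n)$; your proposal drops a factor of four and, more fundamentally, produces an estimate on $W_2$ rather than on $W$. The paper's actual derivation for $W$ works at the level of closed geodesics $\Sigma\subset M_\tau$: first variation of \emph{length}, then $|H_\Sigma|\le|H_{M_\tau}|$ by convexity and $F\ge |H_{M_\tau}|/(C_0 n)$ (Lemma~\ref{lemma1} in the convex case by a symmetrization argument; the preserved pinching in the concave case), then Cauchy--Schwarz, and finally the Borsuk--Fenchel theorem $\int_\Sigma|H_\Sigma|\ge 2\pi$ for a closed space curve. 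Gauss--Bonnet plays no role in the $1$-width case, and Borsuk--Fenchel (not Willmore/Simon) is the correct scalar ingredient. Your outline of the min--max bookkeeping (good sweepouts, almost-maximal slices close to a critical object, pushing forward and Taylor-expanding the slice energy) is sound and matches the paper's structure; what needs to be replaced is the entire variational calculation at the critical slice, replacing your $2$-sphere/Gauss--Bonnet argument with the geodesic/Borsuk--Fenchel argument. You should also make explicit how $F\ge H/(C_0 n)$ is obtained: in the convex case one averages $F$ over the symmetric group and uses convexity to show $F(\lambda)\ge F(\bar\lambda,\dots,\bar\lambda)=\bar\lambda$; in the concave case one needs the preservation of the pinching ratio $\sup H/(nF)$ along the flow, which comes from the parabolic maximum principle as in Andrews.
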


 All functions $F$ in this class are assumed to be either concave or convex. In the concave case, we require an a priori pinching condition on the initial hypersurface $M_0$. In the case of convexity a pinching condition is not needed, and we can take $C_0 = 1$ above.  This class is of interest since it contains many classical flows as particular examples, such as the n-th root of the Gauss curvature, mean curvature, and hyperbolic mean curvature flows. The key to arriving at our estimate on extinction time is a uniform estimate on the rate of change of the width, for which preservation of convexity of the evolving hypersurface is fundamental.

We also study flows which are similar to those considered by Andrews in \cite{andrews1}, except that we allow for higher degrees of homogeneity.  The motivation for this consideration is that the degree 1 homogeneity condition on the flows given in Andrews' paper excludes some naturally arising evolutions, such as powers of mean curvature, for which preservation convexity and extinction time estimates are known (see \cite{schulze}).  An analogous result as (\ref{first}) holds for the time zero derivative of width of an initially convex hypersurface for these kinds of flows.  However in the general case it is, to our knowledge, unknown as to whether convexity is preserved, or even if there is a well-defined extinction time.  Consequently, nothing can be said about the long term behavior of the width of a hypersurface evolving under these flows. We conclude with a formulation of the problem in terms of a higher dimensional analogue of the width, the 2-width, and show that an analogous estimate on the derivative of the 2-width holds, and that in this case the convexity assumption can be loosened to 2-convexity.

The authors would like to thank Professor Minicozzi for bringing the problem to our attention and his continued guidance.

Throughout this paper, for any manifold $M$ immersed in $\R^{n+1}$, $H_M$ will denote the mean curvature vector, i.e., the trace of the Wiengarten map.  We will use $Energy(\cdot)$ to denote the energy of a $W^{1,2}$ map into $\R^m$. Also, when needed, we will assume that the manifold $M$ is oriented  so that the prinicipal curvatures of the convex immersion are positive.

\section{contraction of convex hypersurfaces in $\R^{n + 1}$}

Let $M^n$ be a smooth, closed manifold of dimension $n\ge 2$.  Given a convex smooth immersion $\phi_0:M^n\to\R^{n+1}$, we consider a smooth family of immersions $\phi:M^n\times[0,T)\to\R^{n+1}$ satisfying an equation of the following form:

\begin{eqnarray} \label{evolution}
\frac{\partial}{\partial t}\phi(x,t) & = & F(\lambda(x,t))\nu(x,t) \\
\phi(x,0) & = & \phi_0(x) 
\end{eqnarray}

for all $x\in M^n$ and $t\in[0,T)$.  In this equation $\nu(x,t) = H_{M_t}/|H_{M_t}|$ is the inward pointing unit normal to the hypersurface $M_t:=\phi(M^n,t)=\phi_t(M^n)$ at the point $\phi_t(x)$, and $\lambda(x,t)=(\lambda_1(x,t),...,\lambda_n(x,t))$ are the principal curvatures of $M_t$ at $\phi_t(x)$, that is, the eigenvalues of the Weingarten map of the immersion at that point.

We consider velocity functions $F$ satisfying the following hypotheses, taken from \cite{andrews1}:

\begin{Conditions}\label{conditions}
\begin{enumerate}
\item $F:\Gamma_+\to\R$ is defined on the positive cone $\Gamma_+=\{\lambda=(\lambda_1,...,\lambda_n)\in\R^n:\lambda_i>0,i=1 \ldots n\}$.
\item $F$ is a smooth symmetric function.
\item $F$ is strictly increasing in each argument: $\frac{\partial F}{\partial \lambda_i}>0$ for $i=1\ldots n$ at every point in $\Gamma_+$.
\item $F$ is homogeneous of degree one: $F(c\lambda)=cF(\lambda)$ for any positive $c\in\R$.
\item $F$ is strictly positive on $\Gamma_+$, and $F(1,\ldots ,1)=1$.
\item one of the following holds:
	\begin{enumerate}
	\item F  is convex; or \label{F_is_convex}
	\item F is concave and either \label{F_is_concave}
		\begin{enumerate}
		\item $n = 2$;
		\item f approaches zero on the boundary of $\Gamma_+$; or
		\item $\sup_{x \in M} \frac{H}{F} < \liminf_{\lambda \in \partial \Gamma_+} \frac{\Sigma\lambda_i}		{F(\lambda)}$
		\end{enumerate}
	\end{enumerate}	 
\end{enumerate}
\end{Conditions}

Given an initially convex smooth immersion $\phi_0:M^n\to\R^{n+1}$ and a function $F$ satisfying conditions \ref{conditions} , B. Andrews proved (in \cite{andrews1}, Corollary 3.6, Theorem 4.1 and Theorem 6.2 )  that there exists a unique family of smooth immersions $\phi(x,t): M^n \times [0, T) \rightarrow \R^{n + 1}$ satisfying equations (\ref{evolution}) for all $x\in M^n$. Moreover, he proved that the embeddings $M_t$ remain convex and converge uniformly to a point as $t \rightarrow T$.

\section{Width and the existence of Good Sweepouts} \label{Width}

In this section we recall some notions of \cite{colding-minicozzi}.

Convexity of the immersion $\phi_t: M \rightarrow M_t \subset \R^{n+1}$ implies that $M$ is diffeomorphic to the n-sphere $\S^n$, via the Gauss map.  $\S^n$ is then equivalent to $\S^1 \times B^{n-1}/\sim$, where $\sim$ is the equivalence relation $(\theta_1,y)\sim(\theta_2,y)$ where $\theta_1,\theta_2\in \S^1$ and $y\in\partial B^{n-1}$.  Here $B^{n -1}$ is the unit ball in $\R^{n-1}$.  We use this decomposition of $M  = \S^n$ to define the width $W(t)$ of the immersion $\phi_t$. 

Take $\mathcal{P} = B^{n-1}$, and define $\Omega_t$ to be the set of continuous maps $\sigma: \S^1 \times \mathcal{P} \rightarrow M_t$ such that for each $s  \in \mathcal{P}$ the map $\sigma(\cdot, s)$ is in $W^{1,2}$, such that the map $s \rightarrow \sigma(\cdot,s)$ is continuous from $\mathcal{P}$ into $W^{1,2}$, and finally such that $\sigma(\cdot,s)$ is a constant map for all $s \in \partial \mathcal{P}$.  We will refer to elements of the set $\Omega_t$ as ``sweepouts'' of the manifold $M_t$.  Given a sweepout  $\hat{\sigma} \in \Omega_t$ representing a non-trivial homotopy class in $\pi_n{M}$, the homotopy class $\Omega_t^{\hat{\sigma}}$ is defined to be the set of all maps $\sigma$ that are homotopic to $\hat{\sigma}$ through $\Omega_t$.  We then define the width, $W(t)$, as follows: Fix a sweepout $\beta \in \Omega_0$ representing a non-trivial homotopy class in $M$ and let $\beta_t \in \Omega_t$ denote the corresponding sweepout of $M_t$.  We then take
\begin{equation}
W(t) = W(t,\beta) = \inf_{\sigma \in \Omega_t^{\beta_t}}\max_{s \in \mathcal{P}} Energy(\sigma(\cdot, s)) \notag
\end{equation}
We note that the width is always positive until extinction time. 

In the proof of our main theorem, we will rely heavily on the following
\begin{lemma} \label{goodsweepouts}
For each $t$, there exists a family of sweepouts $\{\gamma^j\}  \subset \Omega_t$ of $M_t$ that satisfy the following:
\begin{enumerate}  
\item The maximum energy of the slices $\gamma^j(\cdot,s)$ converges to $W(t)$.
\item For each $s \in \mathcal{P}$, the maps $\gamma^j(\cdot,s)$ have Lipschitz bound $L$ independent of both $j$ and $s$.
\item Almost maximal slices are almost geodesics: That is, given $\epsilon > 0$, there exists $\delta > 0 $ such that if $j > 1/\delta$ and $Energy(\gamma^j(\cdot,s)) > W(t) - \delta$ for some $s$, then there exists a non-constant closed geodesic $\eta$ in $M_t$ such that $dist(\eta, \gamma^j(\cdot,s)) < \epsilon.$ 
\end{enumerate}
\end{lemma}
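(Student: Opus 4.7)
The plan is to produce $\{\gamma^j\}$ by applying a discretized Birkhoff curve-shortening procedure, continuously in $s$, to an arbitrary minimizing sequence $\{\sigma^j\}\subset \Omega_t^{\beta_t}$, i.e. one satisfying $\max_{s\in\mathcal{P}} Energy(\sigma^j(\cdot,s))\to W(t)$. Since $M_t$ is a smooth compact Riemannian manifold, it has positive injectivity radius $r_0$, and inside any metric ball of radius less than $r_0/2$ the minimizing geodesic between two points is unique and minimizes $W^{1,2}$ energy among all curves sharing its endpoints.

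The central construction is a family of shortening operators $\Psi_N$: partition $\S^1$ into $N$ equal arcs and replace the restriction of a loop $\eta:\S^1\to M_t$ to each arc by the constant-speed minimizing geodesic between the arc's endpoints. For $N$ large relative to an a priori upper bound on the energy of $\eta$, each arc's image lies in a geodesically convex ball, so $\Psi_N(\eta)$ is well defined, has energy no larger than that of $\eta$, is homotopic to $\eta$ through curves of nonincreasing energy, and depends continuously on $\eta$ in $W^{1,2}$. Composing $\Psi_N$ with a half-shifted copy of itself smooths out the vertices. I would then define $\gamma^j$ by applying this combined operator, with $N = N_j \to \infty$, to each slice of $\sigma^j$. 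Continuity in $s$, constancy on $\partial\mathcal{P}$, membership in $\Omega_t^{\beta_t}$, and the uniform Lipschitz bound in (2) all follow directly, while (1) follows since $\Psi_N$ does not increase energy and $\{\sigma^j\}$ was minimizing.

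The main obstacle is property (3), which I would establish by a compactness and contradiction argument resting on the following quantitative fact: on the set of loops in $M_t$ of bounded energy whose distance to the set of closed geodesics is at least $\epsilon$, the operator $\Psi_N$ reduces energy by at least a uniform amount $\delta_0(\epsilon)>0$, once $N$ is taken sufficiently large. This in turn follows from a first variation calculation, which shows that closed geodesics are the only fixed points of $\Psi_N$ in the $N\to\infty$ limit, combined with Arzel\`a-Ascoli compactness on the space of loops with a uniform Lipschitz bound. Negating property (3) would supply, after passing to a subsequence, a parameter $s_k$ and index $j_k$ such that one further iteration of $\Psi_{N_{j_k}}$ applied to all slices of $\gamma^{j_k}$ strictly reduces the maximum energy below $W(t)$, contradicting the definition of $W(t)$ as an infimum of maxima. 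The most delicate point is verifying this reduction happens at every slice achieving nearly the maximum energy, and not merely at the single bad slice $s_k$; this requires combining the uniform lower bound $\delta_0(\epsilon)$ with the continuous $s$-dependence of $\Psi_{N_{j_k}}$.
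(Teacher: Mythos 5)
The paper does not supply a proof of this lemma at all; it simply cites Theorem 1.9 of Colding--Minicozzi (\cite{colding-minicozzi}). Your overall strategy -- discretized Birkhoff curve shortening $\Psi_N$ applied slice-by-slice to a minimizing sequence, relying on convexity-radius balls for well-definedness and on the replaced-by-piecewise-geodesic structure for the uniform Lipschitz bound -- is indeed the strategy used in the cited reference, and your treatment of properties (1) and (2) is sound.

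Your argument for property (3), however, has a genuine gap, and it is not the one you flag at the end. You propose to derive a contradiction by showing that one further application of $\Psi$ to $\gamma^{j_k}$ pushes the \emph{maximum} slice energy below $W(t)$. This cannot happen: $\Psi(\gamma^{j_k})$ is again a competitor in $\Omega^{\beta_t}_t$, so its maximum slice energy is $\geq W(t)$ by definition of the infimum. Concretely, $\gamma^{j_k}$ may well contain a nearly-maximal slice that is already (close to) a closed geodesic; $\Psi$ barely moves that slice, so the max over $s$ stays at $W(t) + o(1)$ even though the single bad slice $s_k$ drops by $\delta_0(\epsilon_0)$. The continuity-in-$s$ fix you suggest does not help, because it is not a continuity problem -- there is simply no reason every nearly-maximal slice should be far from the geodesic set.

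The correct way to close the argument is direct, not by contradiction, and uses precisely the quantitative fact you isolate ($\Psi_N$ drops energy by at least $\delta_0(\epsilon)$ on loops at distance $\geq \epsilon$ from geodesics) but aimed at the \emph{pre-tightened} sweepout. Set $\gamma^j = \Psi_{N_j}(\sigma^j)$. If $Energy(\gamma^j_s) > W(t) - \delta$, then since $\Psi$ is nonincreasing and $\max_s Energy(\sigma^j_s) \to W(t)$, the drop $Energy(\sigma^j_s) - Energy(\gamma^j_s)$ is at most $2\delta + o(1)$; by the lower bound $\delta_0(\epsilon)$, the untightened slice $\sigma^j_s$ must lie within $\epsilon$ of a geodesic; and since $\Psi$ is continuous and fixes geodesics, so does $\gamma^j_s$. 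No second application of $\Psi$ and no appeal to the infimum is needed, and the argument correctly accommodates slices that are already geodesics.
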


The existence of such a family of sweepouts is established in \cite{colding-minicozzi}, Theorem 1.9, and we will not include the proof here.  The distance in the third statement of the previous Lemma is given by the $W^{1,2}$ norm on the space of maps from $\S^1$ to $M$.

\section{F convex} \label{Fconvex}

We assume the velocity function $F$ satisfies conditions (1) through (5)  above, in addition to condition ($\ref{F_is_convex}$).

We then have the following:

\begin{lemma} \label{lemma1}
For all $(\lambda_1,...,\lambda_n)\in\Gamma_+$, it holds $$F(\lambda_1,...,\lambda_n)\ge\frac{\sum_1^n\lambda_i}{n}.$$
\end{lemma}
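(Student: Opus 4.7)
The plan is to show that the arithmetic mean is precisely the tangent (affine supporting) hyperplane of $F$ at the point $(1,\ldots,1)$, and then to invoke convexity to get the one-sided bound.

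First I would use the degree-one homogeneity of $F$. Euler's identity gives
\begin{equation}
F(\lambda) = \sum_{i=1}^n \lambda_i \frac{\partial F}{\partial \lambda_i}(\lambda) \notag
\end{equation}
for every $\lambda \in \Gamma_+$. Evaluated at $\lambda = (1,\ldots,1)$, this reads $1 = F(1,\ldots,1) = \sum_i \partial_i F(1,\ldots,1)$. Because $F$ is symmetric in its arguments, all of the partial derivatives at $(1,\ldots,1)$ are equal, so each of them must equal $1/n$.

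Next I would use the convexity assumption. Since $F$ is smooth and convex on the open convex cone $\Gamma_+$, it lies above any of its tangent hyperplanes:
\begin{equation}
F(\lambda) \;\ge\; F(1,\ldots,1) + \sum_{i=1}^n \frac{\partial F}{\partial \lambda_i}(1,\ldots,1)\,(\lambda_i - 1) \notag
\end{equation}
for every $\lambda \in \Gamma_+$. Substituting $F(1,\ldots,1)=1$ and $\partial_i F(1,\ldots,1) = 1/n$ collapses the right-hand side to $1 + \frac{1}{n}\sum_i(\lambda_i - 1) = \frac{1}{n}\sum_i \lambda_i$, which is exactly the claimed bound.

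There is no real obstacle: the two ingredients (Euler's identity and the tangent-plane characterization of convex functions) are standard, and the key mild observation is just that symmetry forces equality of the partial derivatives at the reference point $(1,\ldots,1)$ so that the gradient there is $\tfrac{1}{n}(1,\ldots,1)$. Monotonicity and positivity of $F$ are not needed for this particular inequality, though of course they are needed elsewhere in the paper to guarantee that $F$ is a legitimate velocity function.
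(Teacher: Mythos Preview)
Your proof is correct, but it is genuinely different from the paper's. The paper argues by symmetrization: it averages $\lambda$ over the full permutation group $S_n$, observes that this average is the diagonal point $\tilde h = (h,\ldots,h)$ with $h=\tfrac{1}{n}\sum\lambda_i$, applies Jensen's inequality (convexity of $F$) to get $F(\tilde h)\le \tfrac{1}{|S_n|}\sum_\sigma F(\lambda_\sigma)=F(\lambda)$, and finally uses homogeneity plus $F(1,\ldots,1)=1$ to evaluate $F(\tilde h)=h$. Your approach instead computes the gradient of $F$ at $(1,\ldots,1)$ via Euler's identity and symmetry, and then invokes the first-order supporting-hyperplane characterization of convexity. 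Both arguments are short and clean; the paper's symmetrization argument has the mild advantage that it never differentiates $F$ (so it would go through for merely continuous convex symmetric $F$), while your argument makes explicit why the arithmetic mean appears, namely as the tangent plane of $F$ at the diagonal. Either proof extends verbatim to the degree-$k$ homogeneous case treated later in the paper.
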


\begin{proof}
Let $\lambda=(\lambda_1,...,\lambda_n)$.  Let $S_n$ the group of permutations of $(1,...,n)$, and let $\lambda_{\sigma}=(\lambda_{\sigma(1)},...,\lambda_{\sigma(n)})$ for each $\sigma\in S_n$.  Then, since $F$ is symmetric, $F(\lambda_{\sigma})=F(\lambda)$ for all $\sigma\in S_n$.

Let $h=\frac{\sum_1^n\lambda_i}{n}$, let $\tilde{h}=(h,...,h)\in\Gamma_+$.  The we have the following:

\begin{eqnarray}
\sum_{\sigma\in S_n}\lambda_{\sigma} & = & (\sum_{\sigma\in S_n}\lambda_{\sigma(1)},...,\sum_{\sigma\in S_n}\lambda_{\sigma(n)}) \notag \\
& = & |S_{n-1}|(\sum_1^n\lambda_i,...,\sum_1^n\lambda_i) \\
& = & n|S_{n-1}|\tilde{h} = |S_n|\tilde{h},\notag
\end{eqnarray}
where the last inequality follows from $|S_n|=n!=n(n-1)!=n|S_{n-1}|$.  Then, since $\frac{1}{|S_n|}<1$, by convexity we have:

\begin{equation}
F(\tilde{h})\le\frac{1}{|S_n|}\sum_{\sigma\in S_n}F(\lambda_{\sigma})=\frac{|S_n|}{|S_n|}F(\lambda)=F(\lambda). 
\end{equation}
But then, by the forth and fifth property of $F$ we have that $h=hF(1,...,1)=F(\tilde{h})$, and therefore we obtain the result.
\end{proof}

We can then prove the following estimate (corollary 2.9 in \cite{colding-minicozzi}):

\begin{lemma} \label{lemma2}
Let $\Sigma\subset M_0$ be a closed geodesic and $\Sigma_t\subset M_t$ the corresponding evolving closed curve.  Let $E_t$ be the energy of $\Sigma_t$.  Then:
\begin{equation}
\frac{d}{dt}_{t=0}E_t\le-\frac{4\pi}{n}. 
\end{equation}
\end{lemma}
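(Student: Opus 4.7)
The plan is to reduce the claim to a geometric inequality by computing the first variation of $E_t$ along the flow, and then to close the argument with Lemma~\ref{lemma1}, Fenchel's theorem, and the Cauchy--Schwarz inequality. Parametrize $\Sigma_0$ by a constant-speed map $\gamma_0 : S^1 \to \Sigma_0$ of length $L = L(\Sigma_0)$, and transport it by the flow to $\gamma_t(\theta) = \phi(\gamma_0(\theta), t)$. Using the evolution equation $\partial_t \phi = F\nu$, the orthogonality $\partial_\theta \gamma \perp \nu$, and the Weingarten formula $D_X\nu = -W(X)$ (so that $\mathrm{II}(X,Y) = \langle W(X),Y\rangle$ is positive on $M_0$), a direct calculation gives
\begin{equation*}
\partial_t\,|\partial_\theta \gamma|^2 \;=\; -\,2\,F\,|\partial_\theta \gamma|^2\,\mathrm{II}(T,T),
\end{equation*}
with $T$ the unit tangent to $\gamma_t$. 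Evaluating at $t=0$, where the parametrization is constant-speed, and converting $d\theta$ to arc length $ds$ on $\Sigma_0$, I would obtain the first-variation formula
\begin{equation*}
\frac{d}{dt} E_t \Big|_{t=0} \;=\; -\,\frac{L}{\pi}\int_{\Sigma_0} F\,\kappa(T)\,ds,
\end{equation*}
where $\kappa(T) := \mathrm{II}(T,T) \ge 0$ denotes the normal curvature of $M_0$ in the direction $T$.

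Next, I would bound the integrand from below in three steps. Lemma~\ref{lemma1} supplies the pointwise inequality $F \ge H/n$. Strict convexity ($\lambda_i > 0$) combined with $\sum_i \langle T, e_i\rangle^2 = 1$ in an orthonormal basis of principal directions $\{e_i\}$ yields $H = \sum \lambda_i \ge \sum \lambda_i\,\langle T, e_i\rangle^2 = \kappa(T)$, so $F\,\kappa(T) \ge \kappa(T)^2/n$ pointwise on $\Sigma_0$. Finally, since $\Sigma_0$ is a geodesic of $M_0$ its intrinsic covariant acceleration vanishes, and hence its curvature as a closed curve in $\R^{n+1}$ coincides with $\kappa(T)$; Fenchel's theorem therefore gives $\int_{\Sigma_0}\kappa(T)\,ds \ge 2\pi$, and Cauchy--Schwarz upgrades this to $L\int_{\Sigma_0}\kappa(T)^2\,ds \ge \bigl(\int_{\Sigma_0}\kappa(T)\,ds\bigr)^2 \ge 4\pi^2$.

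Chaining these three bounds gives
\begin{equation*}
L\int_{\Sigma_0} F\,\kappa(T)\,ds \;\ge\; \frac{L}{n}\int_{\Sigma_0}\kappa(T)^2\,ds \;\ge\; \frac{4\pi^2}{n},
\end{equation*}
and substituting into the first-variation formula produces $\frac{d}{dt}E_t|_{t=0} \le -4\pi/n$, as claimed. The principal obstacle is the first-variation calculation; once it is in hand, the remaining ingredients are standard. Notably, each bound in the chain is sharp in a natural extremal configuration (Fenchel and Cauchy--Schwarz on a planar great circle, $H \ge \kappa(T)$ when the principal curvatures of $M_0$ orthogonal to $T$ vanish along $\Sigma_0$, and $F \ge H/n$ when all principal curvatures agree), so no intermediate step can be improved and the constant $4\pi/n$ is the best possible along this route.
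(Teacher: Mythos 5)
Your argument is correct and follows essentially the same route as the paper: a first-variation computation reduces $\tfrac{d}{dt}E_t\big|_{t=0}$ to $-\tfrac{L}{\pi}\int_\Sigma F\,\kappa(T)\,ds$, and the chain $F\ge H_{M_0}/n$ (Lemma~\ref{lemma1}), $\kappa(T)\le H_{M_0}$ by convexity, Cauchy--Schwarz, and Fenchel's theorem produces the constant $4\pi/n$. The only cosmetic difference is that you compute $\partial_t|\partial_\theta\gamma|^2$ directly via the Weingarten formula, whereas the paper invokes the first-variation formula for length (citing Simon) together with the constant-speed identity relating energy to length; these are the same computation organized slightly differently, and the key inequalities used are identical.
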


\begin{proof}
Observe that, since $M_0$ is convex and $\Sigma$ is minimal, $H_\Sigma$ points in the same direction as $\nu$.  By convexity, we have that $|H_\Sigma |\le |H_{M_0}|$.  Let $V_t$ be the length of the closed curve $\Sigma_t$.  Then, by the first variation formula for volume (see 9.3 and 7.5' in \cite{simon}) we have the following:
\begin{eqnarray}\label{eqn1}
\frac{d}{dt}_{t=0}V_t & = & -\int_{\Sigma}\langle H_{\Sigma},F(x,t)\nu(x,t)\rangle \notag \\
& \le & -\frac{1}{n}\int_{\Sigma}|H_\Sigma|| H_{M_0}| \\ 
& \le & -\frac{1}{n}\int_{\Sigma}|H_{\Sigma}|^2. \notag
\end{eqnarray}
Here the first inequality follows from Lemma \ref{lemma1}, and the second inequality follows from $0\le |H_{\Sigma}| \le |H_{M_0}|$.
Then we can compute the variation of energy as follows:
\begin{equation}
\pi\frac{d}{dt}_{t=0}E_t=V_0\frac{d}{dt}_{t=0}V_t\le-\frac{V_0}{n}\int_{\Sigma}|H_{\Sigma}|^2\le-\frac{1}{n}\left(\int_{\Sigma}|H_{\Sigma}|\right)^2\le-\frac{4\pi^2}{n}. 
\end{equation}
Here the first inequality follows from (\ref{eqn1}), the second from the Cauchy-Schwarz inequality, and the last inequality follows since by Borsuk-Fenchel's theorem every closed curve in $\R^{n+1}$ has total curvature at least $2\pi$ (see \cite{borsuk}, \cite{fenchel}). 
\end{proof}

\section{F concave} \label{Fconcave}
We assume the velocity function $F$ satisfies conditions (1) through (5)  above, in addition to condition ($\ref{F_is_concave}$). We also require that the initial convex immersion $\phi_0: M \rightarrow \R^{n+1}$ satisfies an a priori pinching condition

\begin{equation}
max\{\lambda_1(x,0), ..., \lambda_n(x,0) \} \leq C_0min\{\lambda_1(x,0), ...,\lambda_n(x,0)\}
\end{equation}

for all $x \in M$. Here we have chosen an orientation on $M$ so that the sign of the principle curvatures is positive.  We then have 

\begin{equation}
\sup_{x\in M_0}\frac{|H_{M_0}|}{nF}=C_0. 
\end{equation}
We then have the following:

\begin{lemma} \label{lemma3}
For all $t\ge 0$, $$\sup_{x\in M_t}\frac{|H_{M_t}|}{nF}\le\sup_{x\in M_0}\frac{|H_{M_0}|}{nF}=:C_0.$$ 
\end{lemma}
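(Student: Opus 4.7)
The plan is to apply the parabolic maximum principle to the pinching quotient $Q(x,t) := \frac{H(x,t)}{nF(\lambda(x,t))}$ on $M \times [0,T)$. First I would use the standard evolution equations for curvature flows, derived for flows of this type in \cite{andrews1}: under $\partial_t \phi = F\nu$, both $H$ and $F$ satisfy parabolic equations whose leading order operator is the linearization $L := \dot F^{ij}\nabla_i\nabla_j$, where $\dot F^{ij} = \partial F/\partial h_{ij}$, and this operator is elliptic (indeed uniformly elliptic on compact subsets of the positive cone) by condition (3). The quotient rule then gives an evolution equation of the schematic form
\[
\partial_t Q = L Q + \langle A(\nabla F, \nabla H), \nabla Q \rangle + R,
\]
where $R$ is a zero-order reaction term built from $F$, $\dot F^{ij}$, and the principal curvatures.

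The key step is to show that $R \le 0$ at any interior spatial maximum of $Q$. At such a point the first-order terms vanish and $L Q \le 0$ by positive-definiteness of $\dot F^{ij}$, so only $R$ must be controlled. Here homogeneity (condition (4), which gives the Euler identity $F = \dot F^{ij} h_{ij}$) and concavity of $F$ (condition (\ref{F_is_concave})) are combined in a standard algebraic manipulation to produce the required sign. In fact this is exactly the pinching preservation estimate established in \cite{andrews1} (Theorem 4.1 and the results of Section 4 on the evolution of $\max H/F$), so I would simply invoke that result rather than redo the calculation here. The role of the three subcases (i)--(iii) of condition (\ref{F_is_concave}) is precisely to guarantee that the maximum of $Q$ cannot be achieved on the boundary of $\Gamma_+$, so that the interior maximum principle applies uniformly in $t$.

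With these pieces in place, the parabolic maximum principle yields that $t \mapsto \max_{x \in M_t} Q(x,t)$ is non-increasing, which is the claim. The main obstacle is the reaction-term computation; but since Andrews has carried this out in \cite{andrews1} for precisely our class of speeds $F$ (under the a priori pinching hypothesis we have imposed on $M_0$), the proof reduces to a direct citation.
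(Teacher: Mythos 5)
Your proposal is correct and follows the same route as the paper, which simply invokes the parabolic maximum principle and cites the proof of Theorem~4.1 in \cite{andrews1}; you merely unpack what that citation entails (the evolution equation for the quotient, the sign of the reaction term, the role of homogeneity and concavity, and the boundary control furnished by the subcases of condition (\ref{F_is_concave})) before deferring to Andrews as the paper does.
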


\begin{proof}
By the parabolic maximum principle.  See proof of Theorem 4.1 in \cite{andrews1}.
\end{proof}

We can then prove the following estimate:

\begin{lemma} \label{lemma4}
Let $\Sigma\subset M_0$ be a closed geodesic and $\Sigma_t\subset M_t$ the corresponding evolving closed curve.  Let $E_t$ be the energy of $\Sigma_t$.  Then:
\begin{equation}
\frac{d}{dt}_{t=0}E_t\le-\frac{4\pi}{nC_0}. 
\end{equation}
\end{lemma}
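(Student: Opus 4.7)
The plan is to imitate the argument of Lemma \ref{lemma2} step by step, replacing the single point where convexity of $F$ was used (the inequality $F \geq \frac{1}{n}\sum \lambda_i$ from Lemma \ref{lemma1}) with the pinching inequality supplied by Lemma \ref{lemma3}. Indeed, Lemma \ref{lemma3} at $t=0$ gives precisely $F \geq \frac{|H_{M_0}|}{nC_0}$ pointwise on $M_0$, which plays the role of Lemma \ref{lemma1} in the concave setting.

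Concretely, I would first note that since $\Sigma$ is a closed geodesic in the convex hypersurface $M_0$, it is minimal as a curve in $\mathbf{R}^{n+1}$ and, by convexity of $M_0$, its mean curvature vector $H_\Sigma$ points in the same direction as the inward normal $\nu$ of $M_0$; moreover $|H_\Sigma|\le |H_{M_0}|$. Applying the first variation of length to the evolving curve $\Sigma_t$ with velocity $F\nu$ then yields
\begin{equation}
\frac{d}{dt}\bigg|_{t=0} V_t = -\int_{\Sigma} \langle H_\Sigma, F\nu\rangle = -\int_\Sigma F\,|H_\Sigma|
\le -\frac{1}{nC_0}\int_\Sigma |H_{M_0}|\,|H_\Sigma|
\le -\frac{1}{nC_0}\int_\Sigma |H_\Sigma|^2, \notag
\end{equation}
where the first inequality is Lemma \ref{lemma3} at $t=0$ and the second uses $|H_\Sigma|\le |H_{M_0}|$.

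The remainder of the argument of Lemma \ref{lemma2} then carries through verbatim: writing $\pi\frac{d}{dt}\big|_{t=0}E_t = V_0\,\frac{d}{dt}\big|_{t=0}V_t$, applying Cauchy--Schwarz to bound $V_0\int_\Sigma|H_\Sigma|^2 \ge (\int_\Sigma|H_\Sigma|)^2$, and invoking the Borsuk--Fenchel total curvature inequality $\int_\Sigma |H_\Sigma|\ge 2\pi$, one obtains
\begin{equation}
\pi\,\frac{d}{dt}\bigg|_{t=0}E_t \le -\frac{1}{nC_0}\left(\int_\Sigma |H_\Sigma|\right)^2 \le -\frac{4\pi^2}{nC_0}, \notag
\end{equation}
from which the claim follows after dividing by $\pi$. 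There is no real obstacle here: the only substantive input beyond the convex case is the pinching bound, which is precisely what the hypothesis on $M_0$ together with Lemma \ref{lemma3} is designed to supply, and it is this replacement that accounts for the factor $C_0$ in the final estimate.
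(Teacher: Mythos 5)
Your proof is correct and takes essentially the same approach as the paper: replace the convexity inequality of Lemma~\ref{lemma1} by the pinching bound $F \geq |H_{M_0}|/(nC_0)$ supplied by Lemma~\ref{lemma3} and then run the identical chain of inequalities from Lemma~\ref{lemma2}. One wording slip worth fixing: $\Sigma$ is a geodesic (minimal) \emph{in} $M_0$, not minimal as a curve in $\mathbf{R}^{n+1}$ --- the latter would force $H_\Sigma=0$ and make the estimate vacuous; what you actually use, correctly, is that geodesy in $M_0$ makes $H_\Sigma$ normal to $M_0$, and convexity of $M_0$ then makes it parallel to $\nu$.
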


\begin{proof}

The proof follows exactly as in the proof of Lemma \ref{lemma2}, except that here Lemma \ref{lemma3}
gives the inequality
\begin{equation}
\frac{d}{dt}_{t =0} V_t \leq - \frac{1}{nC_0}\int_\Sigma |H_\Sigma|^2 
\end{equation} 
We then have as before 
\begin{equation}
\pi\frac{d}{dt}_{t=0}E_t=V_0\frac{d}{dt}_{t=0}V_t\le-\frac{V_0}{nC_0}\int_{\Sigma}|H_{\Sigma}|^2\le-\frac{1}{nC_0}\left(\int_{\Sigma}|H_{\Sigma}|\right)^2\le-\frac{4\pi^2}{nC_0}.
\end{equation}

\section{Extinction Time} \label{Extinctiontime}
\end{proof}

We can now prove our main theorem:

\begin{theorem} \label{theorem1}
Let $\{M_t\}_{t\ge 0}$ be a one-parameter family of smooth compact and strictly convex hypersurfaces in $\R^{n+1}$ flowing by equation (\ref{evolution}).  Let $C_0=1$ if $F$ is convex, and $C_0$ as in Lemma~\ref{lemma3} if $F$ is concave.  Then in the sense of limsup of forward difference quotients it holds:
\begin{equation} \label{widthderivative}
\frac{d}{dt}W\le-\frac{4\pi}{nC_0},
\end{equation}
and
\begin{equation} \label{extinctiontime}
W(t)\le W(0)-\frac{4\pi}{nC_0}t.
\end{equation}
\end{theorem}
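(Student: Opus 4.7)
The plan is to combine the geodesic energy estimates from Lemmas \ref{lemma2} and \ref{lemma4} with the good sweepouts of Lemma \ref{goodsweepouts}, following the strategy used by Colding--Minicozzi in the mean curvature case. The integrated estimate (\ref{extinctiontime}) will then follow from (\ref{widthderivative}) by the standard observation that a continuous function whose forward-difference limsup is bounded above everywhere by a constant $c$ is bounded above by its initial value plus $ct$.

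To establish (\ref{widthderivative}) at a fixed time $t_0 \ge 0$, I would fix a sequence of good sweepouts $\{\gamma^j\} \subset \Omega_{t_0}$ as in Lemma \ref{goodsweepouts}. For each $h \ge 0$, consider the smooth diffeomorphism $\Phi_h := \phi_{t_0+h}\circ\phi_{t_0}^{-1} : M_{t_0} \to M_{t_0+h}$. Since $\Phi_h$ is isotopic to the identity through the flow, the pushforward sweepout $\gamma^j_h(\theta,s) := \Phi_h(\gamma^j(\theta,s))$ lies in the same homotopy class $\Omega_{t_0+h}^{\beta_{t_0+h}}$, so by the definition of the width,
\begin{equation*}
W(t_0+h) \le \max_{s\in\mathcal{P}} Energy(\gamma^j_h(\cdot,s)).
\end{equation*}
The task reduces to bounding the right-hand side above by $W(t_0) - \tfrac{4\pi}{nC_0}h + o(h)$ uniformly in $j$ for $j$ large.

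For a fixed slice $\sigma = \gamma^j(\cdot,s)$, the derivative at $h=0$ of $Energy(\Phi_h\circ\sigma)$ is, by the first variation formula, a quadratic functional of $\partial_\theta\sigma$ whose coefficients depend smoothly on the restriction of $F\nu$ to $M_{t_0}$. By compactness of $M_{t_0}$ and smoothness of the flow, this functional is continuous in $\sigma$ with respect to strong $W^{1,2}$ convergence. For a smooth closed geodesic $\eta \subset M_{t_0}$, this derivative coincides with $\tfrac{d}{dt}|_{t=t_0}Energy(\eta_t)$, which by Lemma \ref{lemma2} (or Lemma \ref{lemma4} together with Lemma \ref{lemma3} applied at time $t_0$) is at most $-\tfrac{4\pi}{nC_0}$. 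Since property (3) of Lemma \ref{goodsweepouts} asserts that almost-maximal slices are $W^{1,2}$-close to non-constant closed geodesics, combining this with the $W^{1,2}$-continuity above, for every $\varepsilon > 0$ there exist $\delta > 0$ and $j_0$ such that whenever $j \ge j_0$ and $Energy(\gamma^j(\cdot,s)) > W(t_0) - \delta$,
\begin{equation*}
\frac{d}{dh}\bigg|_{h=0^+} Energy(\gamma^j_h(\cdot,s)) \le -\frac{4\pi}{nC_0} + \varepsilon.
\end{equation*}

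It remains to rule out that the maximum over $s$ is achieved on a sub-maximal slice. The uniform Lipschitz bound of Lemma \ref{goodsweepouts}(2), together with smoothness of the flow and compactness of $M_{t_0}$, yields a $C^0$ bound on $|\partial_h Energy(\gamma^j_h(\cdot,s))|$ independent of $j$ and $s$. Hence slices with $Energy(\gamma^j(\cdot,s)) \le W(t_0) - \delta$ stay strictly below the near-maximal ones for all sufficiently small $h$, so the supremum in $s$ of $Energy(\gamma^j_h(\cdot,s))$ is realized on near-maximal slices, where the derivative bound above applies. Taking $\limsup_{h\to 0^+}$, letting $j\to\infty$, and finally $\varepsilon\to 0$ yields (\ref{widthderivative}). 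The main technical obstacle I anticipate is justifying the $W^{1,2}$-continuity of the energy-derivative functional in a manner uniform enough in $s$ to transfer the geodesic bound from Lemmas \ref{lemma2}, \ref{lemma4} to the maximum of $Energy(\gamma^j_h(\cdot,s))$; this is precisely the step where all three properties of good sweepouts enter simultaneously.
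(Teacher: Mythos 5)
Your proposal follows essentially the same route as the paper: push forward the good sweepouts of Lemma~\ref{goodsweepouts} by the flow diffeomorphism, transfer the geodesic energy estimate of Lemma~\ref{lemma2} (resp.\ Lemma~\ref{lemma4}, with Lemma~\ref{lemma3} justifying the constant at a later time slice) to almost-maximal slices via $W^{1,2}$-continuity of the first variation of energy --- which the paper records quantitatively as inequality~(\ref{twocurves}) --- use the uniform Lipschitz bound to control the evolution of submaximal slices, and finish the integrated bound~(\ref{extinctiontime}) with an open-closed argument.

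One imprecision to flag. You reduce to ``bounding $\max_{s}Energy(\gamma^j_h(\cdot,s))$ above by $W(t_0) - \frac{4\pi}{nC_0}h + o(h)$ uniformly in $j$,'' and you close by ``taking $\limsup_{h\to 0^+}$, letting $j\to\infty$.'' Neither statement is quite right. At $h=0$ one has $\max_{s}Energy(\gamma^j(\cdot,s)) \ge W(t_0)$, so the proposed bound fails at $h=0$ for every $j$; and for fixed $j$ the error term $\max_{s}Energy(\gamma^j(\cdot,s)) - W(t_0)$ is a fixed positive number, so $\limsup_{h\to 0^+}$ of the difference quotient of $W$ cannot be taken before $j\to\infty$. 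The correct reduction is
\begin{equation*}
\max_{s}Energy(\gamma^j_h(\cdot,s)) \le \max_{s}Energy(\gamma^j(\cdot,s)) + \left(-\tfrac{4\pi}{nC_0} + C\varepsilon\right)h + Ch^2
\end{equation*}
with the constant $C$ uniform in $j$ and $s$, after which one lets $j\to\infty$ (so the first term on the right tends to $W(t_0)$) \emph{before} dividing by $h$ and taking $\limsup_{h\to 0^+}$, and then finally $\varepsilon\to 0$. This is exactly the passage from~(\ref{claim}) to~(\ref{claimlimited}) in the paper. With that reordering your argument matches the paper's proof.
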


As a consequence, we get the following bound on the extinction time:

\begin{corollary}
Let $\{M_t\}_{t\ge 0}$ be as above, then it becomes extinct after time at most $$\frac{nC_0W(0)}{4\pi}.$$
\end{corollary}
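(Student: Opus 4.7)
The plan is to derive the corollary directly from Theorem~\ref{theorem1} by using the fact, noted at the end of Section~\ref{Width}, that the width $W(t)$ must stay strictly positive for as long as the hypersurface $M_t$ exists. The inequality (\ref{extinctiontime}) gives a linear upper bound on $W(t)$ that becomes nonpositive precisely at $t^* = nC_0 W(0)/(4\pi)$, and any time strictly beyond $t^*$ would force $W(t) < 0$, which is impossible while $M_t$ is a nonempty smooth hypersurface. Hence the extinction time $T$ satisfies $T \le t^*$.

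More concretely, I would first recall that by Andrews' existence theory (cited in Section~1), the flow $\{M_t\}$ exists on a maximal interval $[0,T)$, with $M_t$ converging to a point as $t \to T$. On this interval each $M_t$ is a smooth closed convex hypersurface, so the homotopy class $\Omega_t^{\beta_t}$ used to define $W(t)$ is nontrivial and $W(t) > 0$ throughout. Then I would apply (\ref{extinctiontime}) at any $t \in [0,T)$ to obtain
\begin{equation}
0 < W(t) \le W(0) - \frac{4\pi}{nC_0}\,t,
\end{equation}
which immediately yields $t < nC_0 W(0)/(4\pi)$. Taking the supremum over $t \in [0,T)$ gives $T \le nC_0 W(0)/(4\pi)$.

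The one point that genuinely needs checking, and which I would flag as the only real obstacle, is the assertion that $W(t) > 0$ up to extinction: one must verify that the sweepout $\beta_t$ obtained by transporting $\beta$ along the flow remains in a nontrivial homotopy class of $\Omega_t$, and that energy cannot degenerate to zero while $M_t$ is still a smooth convex hypersurface. Both facts are standard in this setup, since the flow map $\phi_t$ is a diffeomorphism onto $M_t \cong \S^n$, carrying nontrivial homotopy classes to nontrivial homotopy classes, and since a nontrivial sweepout of $\S^n$ must contain at least one slice of positive length, whose energy is therefore bounded below by an isoperimetric/length-squared constant on $M_t$. Once this positivity is confirmed, the corollary is an immediate rearrangement of (\ref{extinctiontime}).
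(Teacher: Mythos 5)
Your argument is correct and is essentially the same one the paper intends: the paper has already noted (end of Section~\ref{Width}) that $W(t)>0$ until extinction, so combining this positivity with (\ref{extinctiontime}) and rearranging gives the stated bound on the extinction time. Your additional remarks on why $W(t)>0$ persists under the flow are a reasonable amplification of what the paper takes for granted, but they do not change the route.
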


Observe that in the concave case, the constant $C_0$ depends on the pinching of the initial hypersurface $M_0$, whereas in the convex case the bound on the extinction time is independent of the evolving hypersurface.

We will need the following consequence of the first variation formula for energy in the proof of our main theorem: If $\sigma_t$, $\eta_t$ are two families of curves evolving by a $C^2$ vector field $\mathbf{V}$, then 
\begin{equation} \label{twocurves}
 |\frac{d}{dt}Energy(\eta_t) - \frac{d}{dt}Energy(\sigma_t)| \leq C||\mathbf{V}||_{C^2}||\sigma_t - \eta_t ||_{W^{1,2}}(1 + \sup|\sigma'_t|^2)
\end{equation}

\begin{proof}[Proof of Theorem~\ref{theorem1}]
Here we follow the outline of Theorem 2.2 in \cite{colding-minicozzi}

Fix a time $\tau$.  Throughout the proof, $C$ will denote  a constant depending only on $M_\tau$, but will be allowed to change from inequality to inequality.  Let $\gamma^j$ be a sequence of sweepouts in $M_\tau$ given by Lemma \ref{goodsweepouts}.  For $t \geq \tau$, let $\sigma^j_s(t)$ denote the curve in $M_t$ corresponding to $\gamma^j_s=\gamma^j(\cdot,s)$, and set $e_{s,j}(t) = Energy(\sigma^j_s(t))$.  We will use the following claim to establish an upper bound for the width: Given $\epsilon > 0$, there exist $\delta > 0 $ and $h_0 > 0$ so that if $j > 1/\delta$ and $0 < h < h_0$, then for all $s \in \mathcal{P}$

\begin{equation} \label{claim}
e_{s,j}(\tau + h) - \max_{s_0} e_{s_0,j}(\tau) \leq \left[\frac{-4\pi}{nC_0} + C\epsilon\right]h + Ch^2.
\end{equation}

The result follows from (\ref{claim}) as follows: take the limit as $j \rightarrow \infty$ in (\ref{claim}) to get 
\begin{equation} \label{claimlimited}
\frac{W(\tau + h) - W(\tau)}{h} \leq \frac{-4\pi}{nC_0} + C\epsilon +  Ch.
\end{equation}

Taking $\epsilon \rightarrow 0$ in (\ref{claimlimited}) gives (\ref{widthderivative}). 

It remains to establish (\ref{claim}).  First, let  $\delta > 0$ be given by Lemma \ref{goodsweepouts}.  Since $\beta$ is homotopically non-trivial in $M_\tau$, $W(\tau)$ is positive and we can assume that $\epsilon^2 < W(\tau)/3$ and $\delta < W(\tau)/3$.  If $j > 1/\delta$, and $e_{s, j}(\tau) > W(\tau) - \delta$, then Lemma \ref{goodsweepouts} gives a non-constant closed geodesic $\eta$ in $M_\tau$ with $dist(\eta, \gamma^j_s) < \epsilon$.  Letting $\eta_t$ denote the image of $\eta$ in $M_t$, we have, using (\ref{twocurves}) with $\mathbf{V} = F\nu$ and the uniform Lipschitz bound $L$ for the sweepouts  at time  $\tau$, that 
\begin{equation} \label{uniformbound}
\frac{d}{d t}_{t = \tau} e_{s,j}(t) \leq \frac{d}{d t}_{t = \tau} Energy(\eta_t) + C\epsilon||F\nu||_{C^2}(1 + L^2) \leq \frac{-4\pi}{nC_0} + C\epsilon
\end{equation}  
Since $\sigma_s^j(t)$ is the compositon of $\gamma_s^j$ with the smooth flow and $\gamma_s^j$ has Lipschitz bound $L$ independent of $j$ and $s$, the energy function $e_{s,j}(\tau + h)$ is a smooth function of $h$ with a uniform $C^2$ bound independent of both $j$ and $s$ near $h = 0$.  In particular, the Taylor expansion for $e_{s,j}(\tau + h)$ gives
\begin{equation}
e_{s,j}(\tau + h) - e_{s,j}(\tau) = \frac{d}{d h}e_{s,j}(\tau)h + R_1(e_{s,j}(\tau + h))
\end{equation}
where $R_1(e_{s,j}(\tau +h))$ denotes the first order remainder.  The uniform $C^2$ bounds on $e_{s,j}(t)$ give uniform bounds on the remainder terms $R_1(e_{s,j}(\tau + h))$, and using (\ref{uniformbound}), we get 
\begin{equation}
e_{s,j}(\tau + h) - e_{s,j}(\tau) \leq \left\{ \frac{-4\pi}{nC_0}+  C\epsilon\right \}h + Ch^2
\end{equation}
from which the claim follows.  In the case $e_{s, j} (\tau) \leq W(\tau) - \delta$, the claim (\ref{claim}) automatically holds after possibly shrinking $h_0 > 0$:

\begin{equation}
e_{s,j}(\tau  + h) - \max_{s_0}e_{s_0,j}(\tau) \leq e_{s,j}(\tau + h) - e_{s,j}(\tau) - \delta
\end{equation}
Taking $h$ sufficiently small, so that $-\delta \leq \frac{-4\pi}{nC_0} h$, and using the differentiability of $e_{s,j}(t)$, we get (\ref{claim}). 
To get (\ref{extinctiontime}), note that for any $\epsilon > 0$, the set $\{ t | W(t) \leq W(0) - (\frac{4\pi}{nC_0} - \epsilon)t\}$ contains $0$, is closed, and (\ref{widthderivative}) implies that is also open: Take $F(t) = W(t) - W(0) + (\frac{4\pi}{nC_0} - \epsilon)t$, and note that $\frac{d}{d t} F(t)< 0$ in the sense of limsup of forward difference quotients.  We therefore have $W(t) \leq W(0) - (\frac{4\pi}{nC_0} - \epsilon)t $ for all t up to extinction time.  Taking $\epsilon \rightarrow 0$ gives (\ref{extinctiontime}).  

\end{proof}

\section{F homogeneous of degree k} \label{Fkhomogeneous}
We assume the function $F$ has the same properties as in section 3, except that it is homogeneous of degree $k$ for some integer $k>0$, i.e., $F(c\lambda_1,...,c\lambda_n)=c^kF(\lambda_1,...,\lambda_n)$.   Then we can prove the following lemma:

\begin{lemma}
For all $(\lambda_1,...,\lambda_n)\in\Gamma_+$, it holds $$F(\lambda_1,...,\lambda_n)\ge\left( \frac{\sum_1^n\lambda_i}{n} \right)^k.$$
\end{lemma}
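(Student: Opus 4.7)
The plan is to adapt the symmetry-plus-convexity argument from Lemma \ref{lemma1} verbatim, replacing the final invocation of degree-one homogeneity by degree-$k$ homogeneity. All the preparatory properties of $F$ used there (symmetry, convexity, positivity, and the normalization $F(1,\ldots,1)=1$) are still assumed here, so the structural part of the proof carries over without change.

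Concretely, I would set $h = \frac{1}{n}\sum_{i=1}^n \lambda_i$ and $\tilde h = (h,\ldots,h)\in \Gamma_+$, and, as in Lemma \ref{lemma1}, observe
\[
\tilde h \;=\; \frac{1}{|S_n|}\sum_{\sigma \in S_n} \lambda_\sigma,
\]
expressing $\tilde h$ as a convex combination of the coordinate-permuted vectors $\lambda_\sigma$. Convexity of $F$ followed by symmetry then yields
\[
F(\tilde h) \;\le\; \frac{1}{|S_n|}\sum_{\sigma \in S_n} F(\lambda_\sigma) \;=\; F(\lambda).
\]
At this point Lemma \ref{lemma1} used $F(\tilde h) = h\,F(1,\ldots,1) = h$; here instead I would invoke degree-$k$ homogeneity to get
\[
F(\tilde h) \;=\; h^k\, F(1,\ldots,1) \;=\; h^k,
\]
and combine the two displays to conclude $F(\lambda) \ge h^k = \bigl(\frac{1}{n}\sum_i \lambda_i\bigr)^k$, which is the claimed inequality.

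There is no real obstacle: the lemma is a direct generalization and the only substantive modification is the scaling exponent in the evaluation of $F$ at the diagonal vector. One could also have noted that $G := F^{1/k}$ is a candidate degree-one homogeneous function to which Lemma \ref{lemma1} might apply, but this is problematic because $F^{1/k}$ need not be convex even when $F$ is; the permutation-averaging proof above sidesteps this issue entirely by never leaving $F$ itself.
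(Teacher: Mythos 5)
Your proof is correct and is exactly what the paper intends: the paper's own proof is simply the one-line remark that it is ``analogous to the proof of Lemma~\ref{lemma1},'' and you have spelled out precisely the intended analogy (permutation averaging plus convexity to get $F(\tilde h)\le F(\lambda)$, then degree-$k$ homogeneity to evaluate $F(\tilde h)=h^k$). Your closing remark about why one cannot simply apply Lemma~\ref{lemma1} to $F^{1/k}$ is a sensible sanity check, though not needed.
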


\begin{proof} \label{lemma6}
Analogous to the proof of Lemma~\ref{lemma1}.
\end{proof}

We can then prove the following estimate (analogous to lemma \ref{lemma2}):

\begin{lemma} \label{lemma7}
Let $\Sigma\subset M_0$ be a closed geodesic and $\Sigma_t\subset M_t$ the corresponding evolving closed curve.  Let $E_t$ be the energy of $\Sigma_t$.  Then:
\begin{equation}
\frac{d}{dt}_{t=0}E_t^{\frac{k+1}{2}}\le-\frac{k+1}{n^k}(2\pi)^{\frac{k+1}{2}}.
\end{equation}
\end{lemma}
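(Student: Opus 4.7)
My plan is to retrace the proof of Lemma \ref{lemma2}, with the only new ingredients being the degree-$k$ lower bound from Lemma \ref{lemma6} and a H\"older step tuned to the exponent $(k+1)/2$. I would begin with the first variation of length: convexity of $M_0$ together with $\Sigma$ being a geodesic ensures that $H_\Sigma$ points along the inward normal $\nu$ of $M_0$ and that $0 \le |H_\Sigma| \le |H_{M_0}|$, so
\begin{equation}
\frac{d}{dt}\bigg|_{t=0} V_t = -\int_\Sigma \langle H_\Sigma, F\nu\rangle = -\int_\Sigma |H_\Sigma|\,F.
\end{equation}
Substituting the bound $F \ge (|H_{M_0}|/n)^k \ge (|H_\Sigma|/n)^k$ (the first by Lemma~\ref{lemma6}, the second by convexity of $M_0$) yields
\begin{equation}
\frac{d}{dt}\bigg|_{t=0} V_t \le -\frac{1}{n^k}\int_\Sigma |H_\Sigma|^{k+1}.
\end{equation}

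For the energy I would use the same relation $E_t = V_t^2/(2\pi)$ as in Lemma \ref{lemma2}, so that $E_t^{(k+1)/2} = V_t^{k+1}/(2\pi)^{(k+1)/2}$, and apply the chain rule to get
\begin{equation}
\frac{d}{dt}\bigg|_{t=0} E_t^{(k+1)/2} = \frac{(k+1)\,V_0^{k}}{(2\pi)^{(k+1)/2}}\,\frac{d}{dt}\bigg|_{t=0} V_t \le -\frac{(k+1)\,V_0^{k}}{n^k(2\pi)^{(k+1)/2}}\int_\Sigma |H_\Sigma|^{k+1}.
\end{equation}
To close the estimate I would apply H\"older with exponents $(k+1)/k$ and $k+1$ (the natural replacement for the Cauchy--Schwarz step in Lemma \ref{lemma2}) to obtain
\begin{equation}
\int_\Sigma |H_\Sigma|^{k+1} \ge V_0^{-k}\left(\int_\Sigma |H_\Sigma|\right)^{k+1},
\end{equation}
and then use Borsuk--Fenchel to bound $\int_\Sigma |H_\Sigma| \ge 2\pi$. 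The $V_0^k$ factors cancel exactly and the claimed bound $-(k+1)(2\pi)^{(k+1)/2}/n^k$ drops out.

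I do not anticipate a serious obstacle: the argument is parallel to that of Lemma \ref{lemma2}, and the only thing to watch is that the exponents line up. The first variation contributes one power of $|H_\Sigma|$ and the degree-$k$ lower bound on $F$ contributes $k$ more, so the natural integrand is $|H_\Sigma|^{k+1}$; this in turn dictates the choice of H\"older exponents and, after combining with Borsuk--Fenchel, forces the particular power $(k+1)/2$ of the energy appearing on the left-hand side of the statement.
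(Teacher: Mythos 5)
Your argument is correct and follows the paper's proof essentially step for step: the first variation of length, the degree-$k$ lower bound on $F$ combined with $|H_\Sigma|\le|H_{M_0}|$ to produce the integrand $|H_\Sigma|^{k+1}$, the H\"older step with exponents $k+1$ and $(k+1)/k$, and Borsuk--Fenchel. The only cosmetic difference is that the paper differentiates $V_t^{k+1}$ directly while you differentiate $V_t$ and carry the factor $(k+1)V_0^k$ through the conversion $E_t^{(k+1)/2}=V_t^{k+1}/(2\pi)^{(k+1)/2}$; these are the same chain-rule computation.
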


\begin{proof}
Observe that, since $M_0$ is convex and $\Sigma$ is minimal, the mean curvature vector of $\Sigma$ in $\R^{n+1}$ points in the same direction as $\nu$.  We have that $|H_\Sigma| \le |H_{M_0}|$ by convexity.  Let $V_t$ be the length of the close curves $\Sigma_t$, we can use the first variation formula for volume (see 9.3 and 7.5' in \cite{simon}) to obtain the following:
\begin{align}\label{eqn6}
\frac{1}{k+1}\frac{d}{dt}_{t=0}V_t^{k+1}  = &  -V_0^k\int_{\Sigma}\langle H_{\Sigma},F(x,t)\nu(x,t)\rangle  \le -\frac{1}{n^k}V_0^k\int_{\Sigma}|H_\Sigma||H_{M_0}|^k \\ 
& \le  -\frac{1}{n^k}V_0^k\int_{\Sigma}|H_\Sigma|^{k+1} 
 \le  -\frac{1}{n^k}\left(\int_{\Sigma}|H_\Sigma|\right)^{k+1}   \le  -\frac{1}{n^k}(2\pi)^{k+1}. \notag
\end{align}
\begin{equation}
\end{equation}
Here the first inequality follows from Lemma~\ref{lemma1}, the second inequality follows from $0\le|H_\Sigma|\le |H_{M_0}|$, the third one follows from H\"{o}lder's inequality and the last inequality follows as above from Borsuk-Fenchel's theorem.
Then we can compute the variation of energy as follows:
\begin{equation}
\frac{d}{dt}_{t=0}E_t^{\frac{k+1}{2}}=\frac{1}{(2\pi)^{\frac{k+1}{2}}}\frac{d}{dt}_{t=0}V_t^{k+1}\le-\frac{k+1}{n^k}(2\pi)^{\frac{k+1}{2}}.
\end{equation}
\end{proof}

Finally, we could use this estimate to give a bound on the decrease rate of the width as in Theorem~\ref{theorem1}, to obtain the following:

\begin{theorem} \label{theorem4}
Let $\{M_t\}_{t\ge 0}$ be a one-parameter family of smooth compact and strictly convex hypersurfaces in $\R^{n+1}$ flowing by equation (\ref{evolution}), with $F$ as in this section.  Then in the sense of limsup of forward difference quotients it holds:
\begin{equation}
\frac{d}{dt}W^{k+1}\le-\frac{k+1}{n^k}(2\pi)^{\frac{k+1}{2}},
\end{equation}
and
\begin{equation}
W(t)\le W(0)-\frac{k+1}{n^k}(2\pi)^{\frac{k+1}{2}}t
\end{equation}
for as long as the evolving manifold remains smooth and strictly convex.
\end{theorem}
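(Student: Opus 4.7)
The plan is to follow the argument of Theorem~\ref{theorem1} nearly verbatim, but replacing the energy $e_{s,j}(t)=Energy(\sigma^j_s(t))$ by the power $e_{s,j}(t)^{(k+1)/2}$ so that Lemma~\ref{lemma7} becomes the natural input at almost maximal slices.  Fix a time $\tau$, let $\{\gamma^j\}\subset\Omega_\tau$ be a family of good sweepouts furnished by Lemma~\ref{goodsweepouts}, and let $\sigma^j_s(t)\subset M_t$ denote the image of $\gamma^j_s$ under the flow.  The theorem then reduces to the following analogue of the key claim in Theorem~\ref{theorem1}: for every $\epsilon>0$ there exist $\delta>0$ and $h_0>0$ such that whenever $j>1/\delta$ and $0<h<h_0$,
\begin{equation}\label{claimk}
e_{s,j}(\tau+h)^{(k+1)/2}-\max_{s_0}e_{s_0,j}(\tau)^{(k+1)/2}\le\Bigl(-\frac{k+1}{n^k}(2\pi)^{(k+1)/2}+C\epsilon\Bigr)h+Ch^2.
\end{equation}
Passing $j\to\infty$ and then $\epsilon,h\to 0$ gives the forward difference quotient bound on the appropriate power of $W$, and the integrated estimate follows from the same open/closed argument used at the end of Theorem~\ref{theorem1}.

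For slices with $e_{s,j}(\tau)>W(\tau)-\delta$, Lemma~\ref{goodsweepouts} produces a non-constant closed geodesic $\eta\subset M_\tau$ with $\mathrm{dist}(\eta,\gamma^j_s)<\epsilon$ in $W^{1,2}$; flowing $\eta$ and applying Lemma~\ref{lemma7} gives $\frac{d}{dt}|_{t=\tau}Energy(\eta_t)^{(k+1)/2}\le -\frac{k+1}{n^k}(2\pi)^{(k+1)/2}$.  Since $W(\tau)>0$ by the homotopical non-triviality of $\beta$, and $\delta$ can be chosen small compared to $W(\tau)$, both $e_{s,j}(\tau)$ and $Energy(\eta_\tau)$ lie in a range on which $x\mapsto x^{(k+1)/2}$ is smooth with derivative uniformly bounded above and away from zero.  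The chain rule together with (\ref{twocurves}), applied with $\mathbf{V}=F\nu$ and the uniform Lipschitz bound $L$ on the sweepouts, then controls the difference of the derivatives of $e_{s,j}(t)^{(k+1)/2}$ and $Energy(\eta_t)^{(k+1)/2}$ at $t=\tau$ by $C\epsilon$.  A Taylor expansion in $h$, using $C^2$ remainder bounds uniform in $j$ and $s$ (which follow from the Lipschitz bound on the sweepouts and the smoothness of the flow near $\tau$), then produces (\ref{claimk}).  In the complementary case $e_{s,j}(\tau)\le W(\tau)-\delta$, the gap $\max_{s_0}e_{s_0,j}(\tau)^{(k+1)/2}-e_{s,j}(\tau)^{(k+1)/2}$ is bounded below by a positive constant depending on $\delta$, $W(\tau)$, and $k$, so shrinking $h_0$ absorbs the $O(h)$ drift of $e_{s,j}(\tau+h)^{(k+1)/2}$ into this gap.

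The main technical obstacle I anticipate is the chain rule transfer at the $(k+1)/2$-power level: inequality (\ref{twocurves}) controls only the derivative of bare energy, while Lemma~\ref{lemma7} is stated at the level of a nonlinear function of energy, so the derivative factor $\frac{k+1}{2}x^{(k-1)/2}$ must be kept uniformly bounded and uniformly away from zero on the relevant range.  This is exactly what the positivity of $W(\tau)$ and the small choice of $\delta$ guarantee; once that uniformity is in place, the remainder of the argument is mechanical and proceeds exactly as in the proof of Theorem~\ref{theorem1}.
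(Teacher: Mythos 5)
The paper itself offers no proof of Theorem~\ref{theorem4} beyond the remark ``we could use this estimate to give a bound on the decrease rate of the width as in Theorem~\ref{theorem1},'' so what you have written is precisely the adaptation the authors have in mind, and your handling of the one genuinely new technical point --- transferring inequality (\ref{twocurves}) through the chain rule for $x\mapsto x^{(k+1)/2}$, using the positivity of $W(\tau)$ and the smallness of $\delta$ to keep the factor $\frac{k+1}{2}x^{(k-1)/2}$ uniformly bounded above and below on the relevant energy range --- is exactly right.

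One thing you should make explicit, though: what your argument actually delivers is
\begin{equation}
\frac{d}{dt}W^{(k+1)/2}\le-\frac{k+1}{n^k}(2\pi)^{\frac{k+1}{2}},
\qquad
W(t)^{(k+1)/2}\le W(0)^{(k+1)/2}-\frac{k+1}{n^k}(2\pi)^{\frac{k+1}{2}}\,t,
\end{equation}
which is the natural output of Lemma~\ref{lemma7} (where the power on $E_t$ is $(k+1)/2$, not $k+1$). The exponent $k+1$ on $W$ in the displayed theorem, and the un-powered $W(t)\le W(0)-ct$ in its second line, do not match this: neither $\frac{d}{dt}W^{k+1}\le-c$ nor $W(t)\le W(0)-ct$ follows from the constant-rate decay of $W^{(k+1)/2}$ without an extra bound on $W$ itself. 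This is evidently a slip in the theorem's wording rather than a defect in your argument, but as written your proof establishes the $(k+1)/2$-power statement, so you should either correct the statement you are proving or add the (trivial, $W$-dependent) step converting between the two.
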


\section{2 width and 2-convex manifolds} \label{2width}

Until now, we have assumed that the evolving manifold is strictly convex.  Also, we have defined the width using sweepouts by curves.  We can generalize the result for manifolds which are ``2-convex'', that is, such that the sum of any two principal curvatures is positive, by using a different width, defined by sweepouts of 2-spheres.  The fundamental point is that in the case of 2-width, as earlier, one can rely on the existence of a sequence of ``good sweepouts'' as comparisons in order to estimate the derivative of the width.  For higher dimensional sweepouts the analogous result is not known, and so we cannot argue as before in trying to prove an extinction time estimate.  A motivation for dealing with general higher dimensional widths, as opposed to 1-width, is that it allows for a relaxation of the convexity condition.

In defining the width above, we chose to decompose $\S^n$ topologically into the space $\S^1 \times B^{n-1}$, with each set $\{(t,s):t\in\S^1\}$ collapsed to a point, for each $s\in\partial B^{n-1}$.  The purpose of this decomposition is that, since $M$ is a closed convex hypersurface, it is topologically $\S^n$, and the decomposition ensures that the sweepouts induce non-trivial maps on $\pi_n(\S^n)$ - i.e. they are not homotopically trivial - and hence that the width is positive until the hypersurface becomes extinct. 
If we require that the initial immersion $\phi_0: M \rightarrow \R^{n+1} $ is only 2-convex, we lose information about the global topology of $M$, and so it makes sense to use more general parameter spaces than those that ensure the sweepouts are homotopy $\S^n$'s.

Let $\mathcal{P}$ be a compact finite dimensional topological space, and let $\Omega_t$ be the set of continuous maps $\sigma:\S^2\times\mathcal{P} \rightarrow M$ so that for each $s\in\mathcal{P}$ the map $\sigma(\cdot,s)$ is in $C^0\cap W^{1,2}(\S^2,M)$, the map $s\to\sigma(\cdot,s)$ is continuous from $\mathcal{P}$ to $C^0\cap W^{1,2}(\S^2,M)$, and finally $\sigma$ maps $\partial\mathcal{P}$ to point maps.  Given a map $\hat{\sigma}\in\Omega_t$, the homotopy class $\Omega_t^{\hat{\sigma}}\subset\Omega_t$ is defined to be the set of maps $\sigma\in\Omega_t$ that are homotopic to $\hat{\sigma}$ through maps in $\Omega_t$.  To define the 2-width, fix a sweepout $\beta \in \Omega_0$ representing a non-trivial homotopy class in $M_0$ and let $\beta_t \in \Omega_t$ represent the corresponding sweepout in $M_t$.  Then, for each $t$, we define
\begin{equation}
W_2(t)=W_2(t, \hat{\sigma})=\inf_{\sigma\in\Omega_t^{\hat{\sigma}}}\max_{s\in\mathcal{P}}Energy(\sigma(\cdot,s)),
\end{equation} 
where the energy is given by
\begin{equation}
Energy(\sigma(\cdot,s))=\frac{1}{2}\int_{\S^2}|\nabla_x\sigma(x,s)|^2dx.
\end{equation}
It was shown in \cite{colding-minicozzi2} and \cite{colding-minicozzi3} that we could also define the energy using area, and we would obtain the same quantity:
\begin{equation}
W_2^A=\inf_{\sigma\in\Omega_t^{\hat{\sigma}}}\max_{s\in\mathcal{P}}Area(\sigma(\cdot,s))=W_2.
\end{equation}

We should note that, in the above formulation of the 2-width, it was necessary to assume the existence of a nontrivial homotopy class in M. The 2-width of a torus, for example, is always zero.  

As in the case of $1$-width defined above, we will use the existence of  a sequence of ``good sweepouts'' that approximate the width.  Namely, the following theorem was proven in \cite{colding-minicozzi2}, Theorem 1.14: 

\begin{theorem}
Given a map $\beta \in \Omega_t$ representing a non-trivial class in $\pi_n(M_t)$, there exists a sequence of sweepouts $\gamma^j \in \Omega_t^{\beta}$ with $\max_{s\in\mathcal{P}}Energy(\gamma^j(\cdot,s))\to W_2(\beta)$, and so that given $\epsilon>0$, there exist $\bar{j}$ and $\delta>0$ so that if $j>\bar{j}$ and
\begin{equation}
Area(\gamma^j(\cdot,s))>W_2(\beta)-\delta,
\end{equation}
then there are finitely many harmonic maps $u_i: \S^2 \rightarrow M_t$ with
\begin{equation}
d_V(\gamma^j(\cdot,s),\cup_i\{u_i\})<\epsilon.
\end{equation}
Here $d_V$ denotes varifold distance as defined in \cite{colding-minicozzi2}.
\end{theorem}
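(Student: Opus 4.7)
The plan is to adapt the harmonic replacement min--max construction of Colding--Minicozzi to the sweepout setting. Begin with any minimizing sequence $\sigma^j \in \Omega_t^{\beta}$, i.e., $\max_{s \in \mathcal{P}} \mathrm{Energy}(\sigma^j(\cdot,s)) \to W_2(\beta)$. The goal is to modify each $\sigma^j$, within its homotopy class, to a sweepout $\gamma^j$ whose nearly-maximal slices are forced to be close in varifold distance to a finite union of harmonic $2$-spheres in $M_t$.

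First I would recall the single-slice harmonic replacement step: for $u \in W^{1,2}(\S^2, M_t)$ and a small geodesic disk $B \subset \S^2$ on which $u$ has sufficiently small energy, there is a unique energy-minimizing $\tilde u : B \to M_t$ agreeing with $u$ on $\partial B$, and the $\epsilon$-regularity theorem for harmonic maps into a closed target shows $\tilde u$ is smooth and depends continuously on $u|_{\partial B}$. This supplies the central dichotomy: either the replacement drops the energy by a definite amount, or $u|_B$ is already $W^{1,2}$-close to $\tilde u$, i.e., quasi-harmonic.

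Next I would promote this to an operation on whole sweepouts. Fix a finite cover of $\S^2$ by small geodesic disks split into two alternating families (even/odd) of pairwise disjoint disks; the disk radii are chosen so that, uniformly in $s$ and in large $j$, the small-energy hypothesis holds on all boundary circles, which is possible because $\sigma^j(\cdot,s)$ is uniformly continuous in $s$ into $W^{1,2}$ and a Lebesgue-number argument then gives a uniform radius. Apply harmonic replacement simultaneously on all disks of the even family, then on all disks of the odd family, and iterate a fixed finite number of passes. Because constant maps are fixed by the replacement and the replacement depends continuously on boundary data, the resulting $\gamma^j$ is continuous in $s$, sends $\partial \mathcal{P}$ to point maps, is homotopic to $\sigma^j$ through $\Omega_t^{\beta}$, and satisfies $\mathrm{Energy}(\gamma^j(\cdot,s)) \le \mathrm{Energy}(\sigma^j(\cdot,s))$, so the maximum energy still tends to $W_2(\beta)$. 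Finally, fix $\epsilon > 0$. Choose $\delta > 0$ small and $\bar j$ large so that whenever $j > \bar j$ and $\mathrm{Area}(\gamma^j(\cdot,s)) > W_2(\beta) - \delta$, each replacement reduces the energy by at most $O(\delta)$ on every disk of the cover, forcing $\gamma^j(\cdot,s)$ to be quasi-harmonic on each disk. A Sacks--Uhlenbeck bubble-tree extraction applied to any offending subsequence $(\gamma^j(\cdot, s_j))$ produces a weak limit $u_0$ together with finitely many nontrivial bubbles $u_1, \ldots, u_N : \S^2 \to M_t$, all harmonic, such that $\gamma^j(\cdot, s_j) \to \cup_i \{u_i\}$ in varifold distance; a contradiction argument then gives the required uniform statement.

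The main obstacle is arranging the replacement continuously and homotopically in $s$ while respecting the boundary condition on $\partial \mathcal{P}$: the cover, the even/odd partition, and the number of alternating passes must be fixed once and for all in advance of the slicewise data, and one must verify that continuity in $s$ is preserved through each pass even as boundary values move. A secondary technical point is that bubble convergence can only be captured at the varifold level, not in $W^{1,2}$, because the energy lost at concentration points is invisible to the $W^{1,2}$ norm of the weak limit; this is precisely why the statement is phrased using $d_V$.
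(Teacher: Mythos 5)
The paper you are reading does not prove this statement at all: it explicitly cites it as Theorem~1.14 of Colding--Minicozzi's paper on width and finite extinction time of Ricci flow (reference \cite{colding-minicozzi2}), and uses it as a black box. So there is no in-paper proof to compare against. What you have written is instead a condensed roadmap of the actual Colding--Minicozzi argument, and at that level it is recognizably the right structure: harmonic replacement on an alternating even/odd family of small geodesic disks, a fixed number of passes, monotonicity of energy under replacement and its compatibility with the homotopy class and the $\partial\mathcal{P}\mapsto\text{point}$ constraint, followed by a compactness/contradiction argument showing that slices which cannot be tightened concentrate onto finite collections of harmonic spheres.

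That said, as a proof your sketch has real gaps, and they are exactly the points where Colding--Minicozzi do the heavy lifting. First, the dichotomy ``either energy drops by a definite amount or the slice is quasi-harmonic'' needs the quantitative energy convexity estimate for harmonic replacement (comparing the $W^{1,2}$ distance between $u$ and its replacement $\tilde u$ against the energy drop); invoking $\epsilon$-regularity alone does not give it, nor does it give the continuity of the replacement operator on sweepouts. Second, and more seriously, the final step---passing from a Sacks--Uhlenbeck bubble-tree limit of a nearly-maximal, quasi-harmonic sequence to \emph{varifold} convergence onto the union of harmonic spheres---requires ruling out energy loss in the necks. Bubble-tree convergence gives accounting of energy in the limit maps and bubbles, but $d_V$-closeness to $\cup_i\{u_i\}$ is strictly stronger than weak $W^{1,2}$ convergence plus energy identity for a single subsequence; one needs a no-neck/uniform-compactness statement for families of almost-harmonic maps, which Colding--Minicozzi establish separately. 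Your ``contradiction argument then gives the required uniform statement'' compresses this into a sentence. Since the paper itself defers entirely to the cited reference, you would do the same, or, if you intend to present a proof, you would need to fill in precisely these two estimates.
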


We consider now a family $M_t^n\in\R^{n+1}$ evolving by the evolution equation (\ref{evolution}).  However, now the manifolds $M_t$ are not necessarily convex, but they are {\it 2-convex}: we may choose an orientation on the ambient space $\R^{n+1}$ and on $M$ so that the sum of any two principal curvatures is  always positive.  Equivalently, if the principal curvatures are $\lambda_1\le\lambda_2\le...\le\lambda_n$, then $|\lambda_1|\le\lambda_2$.  Observe that this implies that $M_t$ is mean convex, that is, the mean curvature is positive in this orientation.  We assume also that $n>3$.

The velocity function $F$ now is defined in all $\R^n$, and satisfies the rest of conditions in section~\ref{Fconvex}.  Observe that the third condition on $F$ ensures that the evolution equation is parabolic, and therefore a smooth solution exists at least on a short time interval (see Theorem 3.1 in \cite{huisken-polden}).  Also, Lemma~\ref{lemma1} still holds in this case.  Then we can generalize Lemma~\ref{lemma2} as follows:

\begin{lemma} \label{lemma8}
Let $\Sigma\subset M_0$ be a closed minimal surface and $\Sigma_t$ the corresponding surface in $M_t$.  Let $V_t=Area(\Sigma_t)$.  Then:
\begin{equation}
\frac{d}{dt}_{t=0}V_t\le-\frac{16\pi}{nC_0},
\end{equation}
where $C_0$ is $1$ if $F$ is convex, and it is as in Lemma~\ref{lemma3} if $F$ is concave.
\end{lemma}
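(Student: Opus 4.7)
My plan is to mirror the proof of Lemma~\ref{lemma2}, substituting the first variation of area for that of length, the Chen--Willmore inequality for the Borsuk--Fenchel theorem, and a $2$-convex variant of the estimate $|H_\Sigma| \leq |H_{M_0}|$ for its convex version. First, because $\Sigma$ is minimal in $M_0$, the mean curvature vector of $\Sigma$ as a submanifold of $\R^{n+1}$ has vanishing component tangent to $M_0$, so it equals $(\mathrm{tr}_{T\Sigma}\, A_{M_0})\,\nu$, where $A_{M_0}$ is the second fundamental form of $M_0$; by $2$-convexity and our choice of orientation, $\mathrm{tr}_{T\Sigma}\, A_{M_0} \geq \lambda_1 + \lambda_2 > 0$, so $H_\Sigma$ is a positive multiple of $\nu$. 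The first variation of area (see 9.3 in \cite{simon}) then yields
\[
\frac{d}{dt}\Big|_{t=0} V_t \;=\; -\int_\Sigma \langle H_\Sigma, F\nu\rangle \;=\; -\int_\Sigma F\,|H_\Sigma|.
\]

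Next, I would apply Lemma~\ref{lemma1} (in the convex case) or Lemma~\ref{lemma3} (in the concave case) to bound $F \geq |H_{M_0}|/(nC_0)$; Lemma~\ref{lemma1} still applies in the present setup because $2$-convexity together with $n \geq 3$ forces $\sum_i \lambda_i > 0$, so the averaged vector $\tilde h = (h,\ldots,h)$ appearing there lies inside the positive cone where $F$ is defined. To then replace $|H_{M_0}|$ by $|H_\Sigma|$, I would prove the estimate $|H_\Sigma| \leq |H_{M_0}|$. Since $|H_\Sigma|$ is the trace of $A_{M_0}$ restricted to the $2$-plane $T\Sigma$, the minimax principle gives $|H_\Sigma| \leq \lambda_{n-1}+\lambda_n$, so the desired inequality reduces to $\sum_{i=1}^{n-2} \lambda_i \geq 0$. $2$-convexity forces $\lambda_i > 0$ for every $i \geq 2$ (from $\lambda_1+\lambda_i > 0$), and combined with $\lambda_1+\lambda_2 > 0$ this yields $\sum_{i=1}^{n-2} \lambda_i > 0$ precisely when $n > 3$. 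I expect this to be the main obstacle, and it is the only step that uses the dimension hypothesis.

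Finally, I would close the argument using the Chen--Willmore inequality $\int_\Sigma |H|^2\,dA \geq 16\pi$, which holds for any closed immersed $2$-surface in Euclidean space (the role played for curves by $\int |H| \geq 2\pi$), to conclude
\[
\frac{d}{dt}\Big|_{t=0} V_t \;\leq\; -\frac{1}{nC_0}\int_\Sigma |H_\Sigma|\,|H_{M_0}| \;\leq\; -\frac{1}{nC_0}\int_\Sigma |H_\Sigma|^2 \;\leq\; -\frac{16\pi}{nC_0}.
\]
In contrast to Lemma~\ref{lemma2}, no intermediate Cauchy--Schwarz step is required, since Willmore lower-bounds $\int |H_\Sigma|^2$ directly rather than only $\int |H_\Sigma|$; this also accounts for the constant $16\pi$ replacing $4\pi^2$ and the fact that Lemma~\ref{lemma8} is phrased in terms of $V_t$ itself rather than an associated energy.
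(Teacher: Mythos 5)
Your proposal is correct and takes essentially the same approach as the paper's proof: establish that the mean curvature vector of $\Sigma$ is a positive multiple of $\nu$ via $2$-convexity, bound $\mathrm{tr}_{T\Sigma}A_{M_0}$ between $\lambda_1+\lambda_2$ and $\lambda_{n-1}+\lambda_n$, use $2$-convexity with $n>3$ to deduce $|H_\Sigma|\le|H_{M_0}|$, and close with the first variation of area and the Willmore inequality $\int_\Sigma|H_\Sigma|^2\ge16\pi$. The only cosmetic difference is that you spell out why $\sum_{i=1}^{n-2}\lambda_i>0$ (namely, $\lambda_i>0$ for $i\ge2$), whereas the paper writes the equivalent chain $\lambda_{n-1}+\lambda_n\le(\lambda_1+\lambda_2)+\lambda_{n-1}+\lambda_n\le\sum_i\lambda_i$.
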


\begin{proof}
Observe that, since $\Sigma$ is minimal in $M_0$, the mean curvature vector of $\Sigma$ is perpendicular to $M_0$.  Let $k_1$ and $k_2$ be the principal curvatures of $\Sigma$ in $M_0$, that is, $H_{\Sigma}=k_1+k_2$.  Then by $2$-convexity (and because $n>3$), we can chose an orientation on $\R^{n+1}$ and $M$ so that:
\begin{equation}
0\le k_1+k_2\le\lambda_{n - 1}+\lambda_n\le(\lambda_1+\lambda_2)+\lambda_{n-1}+\lambda_n\le\sum_1^n\lambda_i.
\end{equation}
This shows that $H_\Sigma$ and $H_{M_0}$ in fact point in the same direction, with $0\le|H_{\Sigma}|\le|H_{M_0}|$.  As before, we can then compute the first variation of volume as: 
\begin{align}
\frac{d}{dt}_{t=0}V_t  = & -\int_{\Sigma}\langle H_\Sigma,F(x,t)\nu(x,t)\rangle
 \le  -\frac{1}{C_0n}\int_{\Sigma}|H_{\Sigma}||H_{M_0}| 
 \le  -\frac{1}{C_0n}\int_{\Sigma}|H_{\Sigma}|^2
 \le  -\frac{16\pi}{C_0n}, \notag
\end{align} 
where the last inequality is from, e.g. \cite{simon2}.
\end{proof}

Then we can prove the following theorem:

\begin{theorem}
Let $\{M_t\}_{t\ge 0}$ be a one-parameter family of smooth compact $2$-convex hypersurfaces in $\R^{n+1}$ (with $n>3$) flowing as represented in equation (\ref{evolution}).  Then in the sense of limsup of forward difference quotients it holds:
\begin{equation}
\frac{d}{dt}W_2\le-\frac{16\pi}{C_0n}
\end{equation}
and
\begin{equation}
W_2(t)\le W_2(0)-\frac{16\pi}{C_0n}t
\end{equation}
for as long as the solution remains smooth and $2$-convex.
\end{theorem}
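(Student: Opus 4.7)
The plan is to adapt the proof of Theorem~\ref{theorem1} verbatim in structure, replacing curves by $2$-spheres, replacing the good sweepouts of Lemma~\ref{goodsweepouts} with the good $2$-sweepouts from the Colding--Minicozzi theorem stated just before Lemma~\ref{lemma8}, and replacing Lemma~\ref{lemma2} (or Lemma~\ref{lemma4}) with Lemma~\ref{lemma8}. Thus I fix a time $\tau$, and take a sequence $\{\gamma^j\} \subset \Omega_\tau$ whose maximum energies converge to $W_2(\tau)$ and such that any almost-maximal slice $\gamma^j(\cdot,s)$ is close in varifold distance to a finite union of harmonic maps $u_1,\dots,u_m:\S^2 \to M_\tau$. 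For $t\geq \tau$, I flow each slice along the flow to obtain $\sigma^j_s(t) \subset M_t$, set $e_{s,j}(t)=\mathrm{Energy}(\sigma^j_s(t))$, and aim to establish the discrete inequality
\begin{equation}
e_{s,j}(\tau+h) - \max_{s_0} e_{s_0,j}(\tau) \leq \Bigl[-\frac{16\pi}{C_0 n} + C\epsilon\Bigr] h + C h^2,
\end{equation}
the analogue of (\ref{claim}), from which the theorem follows by first taking $j\to\infty$, then $\epsilon\to 0$, exactly as in the proof of Theorem~\ref{theorem1}.

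The first key step is to apply Lemma~\ref{lemma8} to the harmonic comparisons. Since $\S^2$ is two dimensional, any harmonic map $u_i:\S^2\to M_\tau$ is weakly conformal, hence a branched minimal immersion; in particular the image is a (possibly singular) minimal surface in $M_\tau$ and its energy equals its area. Lemma~\ref{lemma8} (applied on each smooth piece and summed) therefore gives
\begin{equation}
\frac{d}{dt}\Big|_{t=\tau} \sum_i \mathrm{Area}(u_{i,t}) \leq -\frac{16\pi}{C_0 n},
\end{equation}
where $u_{i,t}$ denotes the image of $u_i$ in $M_t$ under the flow. The second key step is a first variation comparison: one needs a $2$-dimensional analogue of (\ref{twocurves}) showing that if a slice $\sigma=\gamma^j(\cdot,s)$ is $\epsilon$-close in varifold distance to $\cup_i\{u_i\}$ and has uniformly bounded energy, then
\begin{equation}
\Bigl|\tfrac{d}{dt}\big|_{t=\tau}\mathrm{Energy}(\sigma^j_s(t)) - \tfrac{d}{dt}\big|_{t=\tau}\sum_i\mathrm{Area}(u_{i,t})\Bigr| \leq C\epsilon\,\|F\nu\|_{C^2},
\end{equation}
which combined with the previous display yields $\frac{d}{dt}|_{t=\tau} e_{s,j}(t) \leq -\frac{16\pi}{C_0 n} + C\epsilon$ for almost-maximal slices. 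For slices that are not almost maximal, the inequality holds trivially after shrinking $h_0$, just as in the proof of Theorem~\ref{theorem1}. Taylor expanding $e_{s,j}(\tau+h)$ in $h$ using uniform $C^2$ bounds obtained from the uniform Lipschitz bounds on the sweepout slices then gives the displayed discrete inequality.

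The main obstacle is the first variation comparison in the previous paragraph. In the one dimensional setting one had uniform Lipschitz control on the sweepout slices and worked in $W^{1,2}$, giving the clean estimate (\ref{twocurves}). For $2$-spheres, the natural notion of proximity coming from the good sweepouts theorem is varifold distance, and the harmonic limits $u_i$ may be branched or have several components; moreover the energy of the sweepout slice $\sigma$ need not equal its area. The argument must therefore exploit that varifold closeness together with upper bounds on energies suffices to compare the first variations of area under the smooth vector field $F\nu$ (this is where the $\|F\nu\|_{C^2}$ factor enters), and then use the conformal invariance that identifies area and energy for the harmonic comparisons to the required order in $\epsilon$.

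Once the discrete inequality is in hand, taking $j\to\infty$ gives $\frac{W_2(\tau+h) - W_2(\tau)}{h} \leq -\frac{16\pi}{C_0 n} + C\epsilon + Ch$, and then $\epsilon\to 0$ and $h\to 0^+$ yields the differential inequality for the limsup of forward difference quotients. Finally the integrated bound $W_2(t) \leq W_2(0) - \frac{16\pi}{C_0 n} t$ follows by the same open-and-closed argument used at the end of the proof of Theorem~\ref{theorem1}, valid as long as the flow remains smooth and $2$-convex so that Lemma~\ref{lemma8} continues to apply.
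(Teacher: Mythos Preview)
Your proposal is correct and follows essentially the same approach as the paper, which proves this theorem simply by declaring it ``analogous to the proof of Theorem~\ref{theorem1} (see also \cite{colding-minicozzi2} and \cite{colding-minicozzi3})''; you have in fact fleshed out that analogy in considerably more detail than the paper itself, correctly identifying Lemma~\ref{lemma8} as the replacement for Lemma~\ref{lemma2}/\ref{lemma4} and the Colding--Minicozzi good $2$-sweepouts theorem as the replacement for Lemma~\ref{goodsweepouts}. The technical obstacles you flag (varifold-distance first variation comparison, branched harmonic limits, energy versus area for non-conformal slices, and the absence of an explicit uniform Lipschitz bound in the $2$-sweepout theorem as stated) are genuine and are precisely what the paper defers to \cite{colding-minicozzi2} and \cite{colding-minicozzi3} rather than handling directly.
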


This result gives a bound for the time of the first singularity.  Unlike in the convex case, we don't know that the submanifold contracts into a point by the first singularity.  The proof is analogous to the proof of Theorem~\ref{theorem1} (see also \cite{colding-minicozzi2} and \cite{colding-minicozzi3}).

\end{document}